\definecolor{darkgreen}{rgb}{0.00,0.5,0.00}
\newtheorem{theorem}{Theorem}
\theoremstyle{definition}
\newtheorem{lemma}{Lemma}
\newtheorem{remark}{Remark}
\newcommand{\n}[1]{\left\|#1 \right\|}
\newcommand{\la}{\lambda}
\renewcommand{\t}{\tau}
\newcommand{\s}{\sigma}
\newcommand{\e}{\varepsilon}
\newcommand{\N}{\mathbb N}
\newcommand{\Z}{\mathbb Z}
\newcommand{\x}{\bar x}
\newcommand{\y}{\bar y}
\newcommand{\z}{\bar z}
\newcommand{\lr}[1]{\left\langle #1\right\rangle}
\newcommand{\Hilbert}{\mathcal{H}}
\newcommand{\setto}{\rightrightarrows}
\newcommand{\wto}{\rightharpoonup}
\renewcommand{\empty}{\varnothing}
\DeclareMathOperator{\prox}{prox}
\DeclareMathOperator{\id}{Id}
\DeclareMathOperator{\zer}{zer}
\DeclareMathOperator{\fix}{Fix}
\title{ Shadow Douglas--Rachford Splitting for Monotone Inclusions}
\author{Ern\"o Robert Csetnek\footnote{Faculty of Mathematics,
        University of Vienna, \href{mailto:ernoe.robert.csetnek@univie.ac.at}{ernoe.robert.csetnek@univie.ac.at}} \and Yura Malitsky$^\dagger$
    \and  Matthew K. Tam\footnote{Institute for Numerical and
        Applied Mathematics, University of G\"ottingen,
        \href{mailto:y.malitskyi@math.uni-goettingen.de}{y.malitskyi@math.uni-goettingen.de},
        \href{mailto:m.tam@math.uni-goettingen.de}{m.tam@math.uni-goettingen.de}}}
\date{}
\begin{document}
\maketitle

\begin{abstract}
In this work, we propose a new algorithm for finding a zero in the sum of two monotone operators where one is assumed to be single-valued and Lipschitz continuous. This algorithm naturally arises from a non-standard discretization of a continuous dynamical system associated with the Douglas--Rachford splitting algorithm. More precisely, it is obtained by performing an explicit, rather than implicit, discretization with respect to one of the operators involved. Each iteration of the proposed algorithm requires the evaluation of one forward and one backward operator.
\end{abstract}

\textbf{Keywords. } monotone operator $\cdot$ operator splitting
    $\cdot$ Douglas--Rachford algorithm $\cdot$\\
    dynamical systems
\medskip

\textbf{MSC2010.} {
49M29, 
90C25, 
47H05, 
47J20, 
65K15  
}
\section{Introduction}\label{intro}

The study of continuous time dynamical systems associated with
iterative algorithms for solving optimization problems has a long
history which can be traced back at least to 1950s
\cite{gavurin,arrow1957}. The relationship between the continuous and
discrete versions of an algorithm provides a unifying perspective
which gives insights into their behavior and properties. As we will
see in this work, this includes suggesting new algorithmic schemes as
well as appropriate Lyapunov functions for analyzing their convergence
properties. The interplay between continuous and discrete dynamical
systems has been studied by many authors including
\cite{polyak1964some,al1968continuous,antipin1994minimization,bolte2003continuous,peypouquet2010evolution,abbas2014newton,boct2017dynamical,attouch2018convergence,banert2015forward}.

The following well-known idea will help to motivate the approach used
in this work.  Let $\Hilbert$ be a real Hilbert space and suppose
${B:\Hilbert\to\Hilbert}$ is a maximal monotone operator. Consider the
monotone equation
   \begin{equation}\label{simple:0}
   \text{find}~x\in\Hilbert\quad \text{such that}\quad 0=B(x),
   \end{equation}
to which the following continuous time dynamical system can be attached
\begin{equation}
\label{simple:1}
\dot{x}(t) = - B(x(t)).
\end{equation}
Let $\la>0$. We now devise two iterative algorithms for solving
\eqref{simple:0} by using different discretizations of $\dot{x}(t)$ in
\eqref{simple:1}. To this end, let us first approximate the trajectory
$x(t)$ in \eqref{simple:1} by discretizing at the points $(k\la)_{k\in
    \Z_+}$, and denote the discretized trajectory by $x_k := x(k\la )$. 

Now, on one hand, using the forward discretization $\dot{x}(t) \approx \frac{x_{k+1}-x_k}{\la}$ gives
\begin{equation}
\label{eq:forw}
x_{k+1} = x_k - \la B(x_k).
\end{equation}
In the particular case when $B$ is the gradient of a function, \eqref{eq:forw} is nothing more than the classical \emph{gradient descent method}. On the other hand, using the backward discretization $\dot{x}(t) \approx \frac{x_{k}-x_{k-1}}{\la}$ gives
\begin{equation}
\label{eq:back}
x_{k} = J_{\la B}(x_{k-1}),
\end{equation}
where $J_{A}:=(\id+A)^{-1}$ denotes the \emph{resolvent} of a (potentially multi-valued) maximal
monotone operator $A:\Hilbert\setto A$. This iteration is precisely
the \emph{proximal point algorithm} for the monotone inclusion
\eqref{simple:0}. It is worth emphasizing that \eqref{eq:forw} and
\eqref{eq:back} are different iterative algorithms which, in general,
do not converge under the same conditions. In particular, if $B$ is monotone but not cocoercive, then \eqref{eq:back} converges to a solution for any $\la>0$ whereas the \eqref{eq:forw} does not. Nevertheless, both algorithms correspond to the same continuous dynamical system \eqref{simple:1}.

In this work, we exploit the same type relationship between continuous and discrete dynamical systems to discover a new algorithm for monotone inclusions of the form
\begin{equation}\label{eq:zero 2sum}
\text{find}~x\in\Hilbert\text{ such that }0\in (A+B)(x),
\end{equation}
where $A:\Hilbert\setto\Hilbert$ and $B:\Hilbert\to\Hilbert$ are
(maximally) monotone operators with $B$ Lipschitz
continuous (but not necessarily cocoercive). More precisely, by using
a non-standard discretization of the continuous time \emph{Douglas--Rachford
algorithm}, we obtain
\begin{equation}
\label{eq:fdr intro}
x_{k+1} = J_{\la A }\bigl(x_k - \la B(x_k)\bigr) - \la \bigl(B(x_k)-
B(x_{k-1})\bigr),
\end{equation}
which, as we will show, converges weakly to a solution of \eqref{eq:zero 2sum} whenever $\la\in(0,\frac{1}{3L})$. Note also that, by choosing the operators $A$ and $B$ appropriately, the setting of \eqref{eq:zero 2sum} covers smooth-nonsmooth convex minimization, monotone inclusions through duality, and saddle point problems with smooth convex-concave couplings. For further details, see~\cite{malitsky2018forward}.

Despite substantial progress in monotone operator theory, there are
not so many original splitting algorithms for solving monotone
inclusions of form \eqref{eq:zero 2sum} which use forward evaluations
of $B$. Tseng's \emph{forward-backward-forward algorithm}
\cite{T2000}, published in 2000, was the first such method capable of
solving \eqref{eq:zero 2sum}. Until recently, this was the only known
method with these properties, however there has been progress in the
area with the discovery of further methods having this property
\cite{johnstone2018projective,johnstone2019single,malitsky2018forward}. In
this connection, see also \cite{combettes2012primal,bello2015variant}.

The remainder of this work is organized as follows. In
Section~\ref{sec:dr}, we discuss the classical Douglas--Rachford and
study an alternative form of its continuous time dynamical system. In
Section~\ref{s:fdr}, we discretize this alternative form to obtain
\eqref{eq:fdr intro} and prove its convergence. In
Section~\ref{sec:pd}, we briefly show how the same idea can be applied
to derive a new primal-dual algorithm. Section~\ref{s:end} concludes our
work by suggesting avenues for further investigation.

\section{From the Discrete to the Continuous}
\label{sec:dr}
The \emph{Douglas--Rachford method} is an algorithm for finding a zero
in the sum of maximally monotone operators, $A$ and $B$. This popular
\emph{splitting method} works by only requiring the evaluation of the
resolvents of each of the operators individually, rather than the
resolvent of  their sum. The method was first formulated for
solving linear equations in \cite{douglas1956numerical} and later
generalized to monotone inclusions in \cite{lions-mercier}.

The method can be compactly described as the fixed point iteration
\begin{equation}\label{dr-1}
  z_{k+1} 
  = \left(\frac{\id + R_{\la A} R_{\la B}}{2}\right)z_k,
\end{equation}
where $R_{\la B}=2J_{\la B}-\id$ denotes the \emph{reflected
    resolvent} of a monotone operator $\la B$. Its behavior is summarized in the following theorem.
\begin{theorem}
  (\cite[Theorem 25.6]{BC2010}).
  Let $A\colon \Hilbert \setto \Hilbert$ and $B\colon \Hilbert \setto
  \Hilbert$ be maximally monotone operators with
  $\zer(A+B) \neq \empty$. Let $\la>0$ and $z_0\in\Hilbert$. Then the sequence $(z_k)$, generated by \eqref{dr-1}, 
  satisfies
  \begin{enumerate}[(i)]
  \item $(z_k)$ converges weakly to a point $z\in
    \fix (R_{\la A} R_{\la B})$.
  \item $(J_{\la B}z_k)$ converges weakly to $J_{\la B}z \in \zer(A+B)$.
  \end{enumerate}
\end{theorem}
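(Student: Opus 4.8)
The plan is to recognize the Douglas--Rachford operator $T := \tfrac12(\id + R_{\la A}R_{\la B})$ as an averaged nonexpansive mapping and then to invoke the standard convergence theory for fixed-point iterations of such mappings, treating (i) as the ``clean'' part and (ii) as the genuinely delicate one. First I would recall that, by Minty's theorem, the resolvent $J_{\la B}$ of a maximally monotone operator is single-valued, everywhere defined, and firmly nonexpansive; consequently the reflected resolvents $R_{\la A},R_{\la B}$ are nonexpansive, their composition $R_{\la A}R_{\la B}$ is nonexpansive, and $T$ is $\tfrac12$-averaged. The iteration \eqref{dr-1} is therefore a Krasnosel'ski\u{\i}--Mann iteration of $T$.

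For part (i), the first step is the fixed-point characterization: I would show that $z\in\fix(R_{\la A}R_{\la B})$ if and only if $x:=J_{\la B}z$ satisfies $0\in(A+B)(x)$, by writing out the two inclusions $\tfrac{z-x}{\la}\in B(x)$ and $\tfrac{x-z}{\la}\in A(x)$ that the fixed-point equation encodes. In particular, since $\zer(A+B)\neq\empty$, this yields $\fix T = \fix(R_{\la A}R_{\la B})\neq\empty$. With a nonempty fixed-point set in hand, weak convergence of $(z_k)$ follows from the general theory for averaged operators: Fej\'er monotonicity of $(z_k)$ with respect to $\fix T$ gives boundedness and asymptotic regularity $z_{k+1}-z_k\to 0$ strongly, the demiclosedness principle (that $\id-R_{\la A}R_{\la B}$ is demiclosed at $0$) identifies every weak sequential cluster point as a fixed point, and Opial's lemma upgrades this to weak convergence of the whole sequence to a single $z\in\fix(R_{\la A}R_{\la B})$.

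For part (ii), I would introduce the shadow sequences $x_k:=J_{\la B}z_k$ and $a_k:=J_{\la A}R_{\la B}z_k$, together with $b_k:=\tfrac{z_k-x_k}{\la}\in B(x_k)$ and $a_k^*:=\tfrac{R_{\la B}z_k-a_k}{\la}\in A(a_k)$. A direct computation rewrites the update as $z_{k+1}=z_k+(a_k-x_k)$, so asymptotic regularity gives $a_k-x_k\to 0$ strongly, whence $b_k+a_k^*=\tfrac{x_k-a_k}{\la}\to 0$ strongly as well. The bounded sequence $(b_k)$ has, along any subsequence, a weak cluster point $\bar b$, and then $x_{k_n}\wto\bar x:=z-\la\bar b$ (using $z_{k_n}\wto z$) and $a_{k_n}\wto\bar x$. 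The crucial observation is self-correcting: once $\bar b\in B(\bar x)$ is known, the identity $z=\bar x+\la\bar b$ gives $z\in(\id+\la B)(\bar x)$, hence $\bar x=J_{\la B}z$; thus every weak cluster point of $(x_k)$ equals $J_{\la B}z$, and the whole shadow sequence converges weakly to it, with $0\in(A+B)(\bar x)$ following from $\bar b\in B(\bar x)$ and $-\bar b\in A(\bar x)$.

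The hard part is exactly the limit passage $\bar b\in B(\bar x)$ (and its counterpart for $A$): since $J_{\la B}$ is \emph{not} weakly sequentially continuous, one cannot simply push $z_k\wto z$ through the resolvent, and the graphs of $A$ and $B$ are only closed in the weak--strong topology, whereas here the ``dual'' variables $b_k,a_k^*$ converge merely weakly. The mechanism I would use is the weak--weak closedness of a maximally monotone graph under the product condition $\limsup_n\lr{x_{k_n},b_{k_n}}\le\lr{\bar x,\bar b}$, attempting to supply this condition from the vanishing coupling via $\lr{x_k,b_k}+\lr{a_k,a_k^*}\to 0$ together with the two monotonicity inequalities against the reference pair determined by $z$. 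I expect this verification to be the main obstacle: monotonicity against a fixed reference pair alone falls short of the required bound by precisely a term of the form $\la\n{\bar b-b^*}^2$, so a sharper, dedicated argument coupling the $A$- and $B$-data is needed to close the gap — this is the historically subtle point in establishing weak convergence of the Douglas--Rachford shadow sequence.
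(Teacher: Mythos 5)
You should first note that the paper does not prove this statement at all; it is quoted verbatim from \cite[Theorem~25.6]{BC2010}, so your attempt can only be measured against the standard argument (and against the closely related proofs of Theorems~\ref{th:main-1} and~\ref{th:main-2} in this paper, which reuse its key device). Your part~(i) is complete and correct: averagedness of $T=\tfrac12(\id+R_{\la A}R_{\la B})$, nonemptiness of $\fix T$ via the identity $J_{\la B}(\fix(R_{\la A}R_{\la B}))=\zer(A+B)$, Fej\'er monotonicity, asymptotic regularity, demiclosedness of $\id-R_{\la A}R_{\la B}$, and Opial's lemma together give weak convergence of $(z_k)$.

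The genuine gap is the one you flag yourself: the limit passage $\bar b\in B(\bar x)$, $-\bar b\in A(\bar x)$ in part~(ii), where both coordinates of the graph points $(x_k,b_k)\in\gra B$ and $(a_k,a_k^*)\in\gra A$ converge only weakly. Your diagnosis (the inner-product criterion applied to one operator at a time misses by a square term) is accurate, but the proposal stops there, so the proof is not closed. The standard way to finish is to refuse to treat $A$ and $B$ separately: add the two monotonicity inequalities $\lr{x_k-u,b_k-v}\ge 0$ and $\lr{a_k-u',a_k^*-v'}\ge 0$ for arbitrary $(u,v)\in\gra B$, $(u',v')\in\gra A$, and observe that the only term not controlled by weak convergence against fixed vectors is $\lr{x_k,b_k}+\lr{a_k,a_k^*}$, which \emph{does} tend to $0$ because
\begin{equation*}
\lr{x_k,b_k}+\lr{a_k,a_k^*}=\lr{x_k,b_k+a_k^*}+\lr{a_k-x_k,a_k^*}\to 0
\end{equation*}
by the strong convergences $b_k+a_k^*\to 0$, $a_k-x_k\to 0$ and boundedness of $(x_k)$, $(a_k^*)$. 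Passing to the limit in the summed inequality shows that the pair $\bigl((\bar x,\bar x),(\bar b,-\bar b)\bigr)$ is monotonically related to $\gra(B\times A)$ on $\Hilbert\times\Hilbert$; maximal monotonicity of the product operator then yields $\bar b\in B(\bar x)$ and $-\bar b\in A(\bar x)$ simultaneously, after which your self-correcting identification $\bar x=J_{\la B}z$ and the uniqueness of the cluster point go through. Equivalently, and more in the spirit of this paper, you can lift the two inclusions into the single inclusion
\begin{equation*}
-\binom{z_{k+1}-z_k}{z_{k+1}-z_k}\in\left(\begin{bmatrix}\la A\\(\la B)^{-1}\end{bmatrix}+\begin{bmatrix}0&\id\\-\id&0\end{bmatrix}\right)\binom{z_{k+1}-z_k+x_k}{z_k-x_k},
\end{equation*}
whose left-hand side converges strongly to $0$ while the argument converges weakly; Lemma~\ref{lem:demiclosed} (weak--strong sequential closedness of the graph of a maximally monotone operator) then delivers both limit inclusions in one stroke, exactly as in the proofs of Theorems~\ref{th:main-1} and~\ref{th:main-2}. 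Either device supplies precisely the ``sharper, dedicated argument coupling the $A$- and $B$-data'' that you correctly identified as missing.
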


The iteration \eqref{dr-1} can be viewed as a discretization of the continuous time dynamical system
\begin{equation}\label{dr-3}
  \dot{z}(t)+z(t)
  = \left(\frac{\id + R_{\la A} R_{\la B}}{2}\right)z(t),
\end{equation}
where the discretization $\dot{z}(t)\approx z_{k+1}-z_k$ and
$z(t)\approx z_k$ are used. Since the operator $R_{\la A} R_{\la B}$
is nonexpansive (\emph{i.e.,} $1$-Lipschitz), the Picard-Lindel\"of theorem
\cite[Theorem 2.2]{granas2013fixed} implies that, for any $z_0\in \Hilbert$, there exists a unique
trajectory $z(t)$ satisfying \eqref{dr-3} and the initial
condition $z(0) = z_0$.

Let us now express this dynamical system in an alternative form. First, by using the definition of the reflected resolvent, we observe that \eqref{dr-3} can be written as
\begin{equation}
  \label{dyn2}
  	\dot{z}(t) = J_{\la A}\bigl(2J_{\la B}(z(t)) - z(t) \bigr) - J_{\la B}(z(t)).
\end{equation}
Denote $x(t) = J_{\la B}(z(t))$ and $y(t) = z(t) - x(t)$. Clearly, $y(t)\in \la B(x(t))$. Then we have 
\begin{equation}
  \label{z(t)}
  z(t) = x(t) + y(t),\quad \text{and} \quad \dot{z}(t) = \dot{x}(t) + \dot{y}(t).
\end{equation}
By using these identities to eliminate $z$ from \eqref{dyn2}, we obtain
  \begin{equation}\label{dyn3}
	\begin{aligned}
	\dot{x}(t)+x(t) &= J_{\la A}\left( x(t)- y(t)\right) -\dot{y}(t), \\
	y(t)            &\in \la B(x(t)).
      \end{aligned}
    \end{equation}
This system can be viewed as the continuous dynamical system associated with the shadow trajectories, $x(t)$, of the Douglas--Rachford system \eqref{dr-3} specified by $z(t)$. In particular, this fact implies the existence of the trajectories $x(t)$ and $y(t)$. In a later section, we will use a discretization of this system to obtain a new splitting algorithm. 

We begin with a theorem concerning the asymptotic behavior of \eqref{dyn3}. Although this result can be obtained, with some work, from \cite[Theorem~6]{boct2017dynamical}, we give a more direct proof which serves the additional purpose of providing insights useful for the analysis of the discrete case. We require the following two preparatory lemmas.

\begin{lemma}\label{lem:demiclosed}
Let $\la>0$. Suppose $A\colon \Hilbert \setto \Hilbert$ and $B\colon \Hilbert \setto  \Hilbert$ are maximally monotone operators. Then the set-valued operator on $\Hilbert\times\Hilbert$ defined by
    \begin{equation}\label{eq:mono sum}
    \binom{x}{y}\mapsto \left(\begin{bmatrix}\la A\\(\la B)^{-1}\end{bmatrix}+\begin{bmatrix}
    0 & \id \\ -\id & 0 \\
    \end{bmatrix}\right)\binom{x}{y},
    \end{equation} 
is demiclosed. That is, its graph is a sequentially closed set in the weak-strong topology.
\end{lemma}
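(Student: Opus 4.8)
The plan is to identify the operator in \eqref{eq:mono sum} as a bounded skew perturbation of a decoupled maximally monotone operator, to conclude that the whole operator is itself maximally monotone, and then to obtain the asserted sequential closedness directly from the monotonicity inequality by means of a weak-strong limit.

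First I would write the operator as $T = M + S$, where $M = \begin{bmatrix}\la A \\ (\la B)^{-1}\end{bmatrix}$ and $S = \begin{bmatrix} 0 & \id \\ -\id & 0 \end{bmatrix}$ are the two summands appearing in \eqref{eq:mono sum}. Since positive scalar multiples and inverses of maximally monotone operators are again maximally monotone, both $\la A$ and $(\la B)^{-1}$ are maximally monotone; as the direct sum of two maximally monotone operators on a product space is maximally monotone, so is $M$. The operator $S$ is bounded, linear, and skew-symmetric, since $\lr{S\binom{x}{y},\binom{x}{y}} = \lr{y,x}-\lr{x,y}=0$; in particular it is monotone, everywhere defined, and continuous, hence maximally monotone with $\dom S = \Hilbert\times\Hilbert$.

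Because $S$ has full domain and is continuous, the classical sum theorem for maximally monotone operators yields that $T = M+S$ is maximally monotone. To establish demiclosedness I would then argue directly. Let $\bigl((x_n,y_n),(u_n,v_n)\bigr)$ be a sequence in $\gra T$ with $(x_n,y_n)\wto(x,y)$ and $(u_n,v_n)\to(u,v)$. For every $\bigl((p,q),(r,s)\bigr)\in\gra T$, monotonicity of $T$ gives
\[
\lr{(x_n,y_n)-(p,q),\,(u_n,v_n)-(r,s)}\ge 0.
\]
Passing to the limit, the first argument converges weakly while the second converges strongly, so the pairing converges to $\lr{(x,y)-(p,q),\,(u,v)-(r,s)}\ge 0$. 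As this holds for all elements of $\gra T$, maximality of $T$ forces $(u,v)\in T(x,y)$, which is precisely the claimed weak-strong sequential closedness.

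The conceptual point, and the step I expect to be the most delicate to phrase correctly, is that one must run the argument at the level of the full coupled operator $T$ rather than on its two diagonal blocks. A block-wise attempt would produce the pair $(x_n,\,u_n-y_n)\in\gra(\la A)$, in which \emph{both} coordinates converge only weakly, so that the weak-strong demiclosedness of a single maximally monotone operator does not apply. The skew coupling mixes the strongly convergent image $(u_n,v_n)$ with the only weakly convergent argument $(x_n,y_n)$; keeping $T$ intact preserves the strong convergence of its image, which is all the monotonicity limit requires, whereas decomposing corrupts it. By comparison, verifying the hypotheses of the sum theorem (full domain and continuity of $S$) is entirely routine.
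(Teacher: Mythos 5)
Your proof is correct and takes essentially the same route as the paper: both establish maximal monotonicity of the sum by noting that the skew linear part is maximally monotone with full domain, and then conclude via demiclosedness of maximally monotone operators. The only difference is that you unpack the standard weak--strong limit argument for that last step, which the paper simply cites as a known result.
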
	
\begin{proof}
Note that the operator in \eqref{eq:mono sum} is maximally monotone as the sum of two maximally monotone, the latter having full domain \cite[Corollary~24.4(i)]{BC2010}. Since maximally monotone operators are demiclosed \cite[Proposition~20.32]{BC2010}, the result follows.
\end{proof}	
 
Although the following lemma is a direct consequence of~\cite[Lemma~5.2]{abbas2014newton}, we include its explicit statement for the convenience of the reader.
\begin{lemma}\label{lem:l2}
Suppose $T\colon\Hilbert\to\Hilbert$ is $L$-Lipschitz continuous. If $\dot{z}(t)=T(z(t))$ and $\int_0^\infty\n{\dot{z}(t)}^2\,dt<+\infty$, then $\dot{z}(t)\to 0$ as $t\to+\infty$.
\end{lemma}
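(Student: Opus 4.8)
The plan is to reduce the statement to a one-dimensional estimate for the scalar speed $\sigma(t) := \n{\dot z(t)} = \n{T(z(t))}$ and then to rule out the failure of $\sigma(t)\to 0$ by a disjoint-intervals argument played against the finite $L^2$-mass of $\dot z$. Since $T$ is continuous and $\dot z(t) = T(z(t))$, the trajectory $z$ is $C^1$ and hence $\sigma$ is continuous, so it suffices to prove $\sigma(t)\to 0$.

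The first step — and the only place where Lipschitz continuity enters — is to establish a Gronwall-type control on $\sigma$. For $0\le s\le t$ I would estimate
\[
|\sigma(t)-\sigma(s)| \le \n{\dot z(t)-\dot z(s)} = \n{T(z(t))-T(z(s))} \le L\n{z(t)-z(s)} \le L\int_s^t \sigma(\tau)\,d\tau,
\]
where the last inequality uses $z(t)-z(s)=\int_s^t\dot z(\tau)\,d\tau$ together with the triangle inequality for integrals. Because $\sigma$ is continuous (hence locally bounded), the right-hand side forces $\sigma$ to be locally Lipschitz, thus absolutely continuous, with $|\dot\sigma(t)|\le L\sigma(t)$ for a.e.\ $t$. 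In particular $\dot\sigma(t)\ge -L\sigma(t)$ a.e., so $t\mapsto e^{Lt}\sigma(t)$ is nondecreasing, which yields the one-sided bound
\[
\sigma(t)\ge \sigma(s)\,e^{-L(t-s)}\qquad\text{for all } t\ge s\ge 0,
\]
expressing that $\sigma$ cannot decay faster than an explicit exponential rate.

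Second, I would argue by contradiction. If $\sigma(t)\not\to 0$, there exist $\varepsilon>0$ and $t_n\to+\infty$ with $\sigma(t_n)\ge\varepsilon$; passing to a subsequence I may assume the $t_n$ are $\eta$-separated, where $\eta:=\tfrac{\ln 2}{L}$, so that the intervals $I_n:=[t_n,t_n+\eta]$ are pairwise disjoint. The lower bound gives, for every $t\in I_n$, that $\sigma(t)\ge\sigma(t_n)e^{-L\eta}\ge \varepsilon/2$, whence $\int_{I_n}\sigma(\tau)^2\,d\tau\ge \eta\varepsilon^2/4$. Summing over $n$ then produces $\int_0^\infty\n{\dot z(\tau)}^2\,d\tau=\int_0^\infty\sigma(\tau)^2\,d\tau=+\infty$, contradicting the hypothesis. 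Hence $\sigma(t)\to 0$, i.e.\ $\dot z(t)\to 0$.

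I expect the main (though still mild) obstacle to be the regularity bookkeeping in the first step: since $T$ is merely Lipschitz, $\sigma$ need not be differentiable everywhere, so the differential inequality $|\dot\sigma|\le L\sigma$ must be extracted from the integral estimate via absolute continuity rather than from a naive chain rule on $\n{T(z(\cdot))}$. Everything after that is elementary, and it is precisely the exponential lower bound combined with the disjointness of the $I_n$ that upgrades square-integrability into genuine convergence — in the spirit of, but without needing, Barbalat's lemma.
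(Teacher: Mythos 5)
Your argument is correct, and it takes a genuinely different route from the paper's. The paper differentiates once more: since $T$ is $L$-Lipschitz, $\ddot z$ exists almost everywhere with $\n{\ddot z(t)}\le L\n{\dot z(t)}$, hence $\ddot z\in L^2$; then $\tfrac{d}{dt}\n{\dot z(t)}^2=2\lr{\ddot z(t),\dot z(t)}\le\n{\ddot z(t)}^2+\n{\dot z(t)}^2$ has integrable right-hand side, and the conclusion follows from the Barbalat-type result \cite[Lemma~5.2]{abbas2014newton} (a nonnegative, locally absolutely continuous, integrable function whose derivative is bounded above by an integrable function tends to zero). You instead never touch the second derivative or any external lemma: you derive the integral estimate $|\sigma(t)-\sigma(s)|\le L\int_s^t\sigma(\tau)\,d\tau$ for $\sigma=\n{\dot z}$, upgrade it (correctly handling the a.e.-differentiability issue via local Lipschitzness) to $\dot\sigma\ge -L\sigma$, hence to the quantitative no-fast-decay bound $\sigma(t)\ge\sigma(s)e^{-L(t-s)}$, and then play disjoint intervals of fixed length against the finite $L^2$-mass. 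All steps check out: the monotonicity of $e^{Lt}\sigma(t)$ is legitimate because $\sigma$ is locally absolutely continuous, and the $\eta$-separated subsequence exists since $t_n\to\infty$. What each approach buys: the paper's proof is shorter given the cited machinery and fits the standard toolkit of this dynamical-systems literature, while yours is fully self-contained, elementary, and makes the mechanism explicit --- it is exactly the exponential lower bound on the speed, a quantitative form of uniform continuity of $\sigma^2$, that converts square-integrability into convergence.
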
	
\begin{proof}		
Since $T$ is $L$-Lipschitz continuous, \cite[Remark~1]{boct2017dynamical} implies that $\ddot{z}$ exists almost everywhere and that
  $ \n{\ddot{z}(t)} = \n{\frac{d}{dt}Tz(t)} \leq L\n{\dot{z}(t)} $
for almost all $t\geq 0$. From this it follows that $\int_0^\infty\n{\ddot{z}(t)}^2\,dt<+\infty$. We also have
  $$ \frac{d}{dt}\n{\dot{z}(t)}^2 = 2\lr{\ddot{z}(t),\dot{z}(t)} \leq \n{\ddot{z}(t)}^2 + \n{\dot{z}(t)}^2.$$
Since the right hand side is integrable, \cite[Lemma~5.2]{abbas2014newton} yields the result.
\end{proof}

The following theorem is our main result regarding the asymptotic behavior of \eqref{dyn3}.
\begin{theorem}\label{th:main-1}
  Let $A\colon \Hilbert \setto \Hilbert$ and $B\colon \Hilbert \setto
  \Hilbert$ be maximally monotone operators with $\zer(A+B) \neq
  \empty$. Let $\lambda>0$ and $x_0\in \Hilbert$. Then the
  trajectories $x(t)$, $y(t)$, generated by \eqref{dyn3} with initial condition $x(0)=x_0$, satisfy
  \begin{enumerate}[(i)]
  \item $x(t)$ converges weakly to a point $\x\in \zer(A+B)$.
  \item $y(t)$ converges weakly to a point $\y\in \la B(\x)\cap(- \la A(\x))$.
  \end{enumerate}
\end{theorem}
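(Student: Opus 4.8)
\emph{Proof sketch.} The plan is to recast \eqref{dyn3} through the Douglas--Rachford operator and then run a continuous-time Opial argument, using Lemma~\ref{lem:l2} to force the velocity to zero and Lemma~\ref{lem:demiclosed} to identify the weak limits. Write $T:=\frac{\id+R_{\la A}R_{\la B}}{2}$ and set $z(t):=x(t)+y(t)$; by the derivation of \eqref{dyn3} this $z$ solves $\dot z(t)=(T-\id)z(t)$ with $x(t)=J_{\la B}(z(t))$ and $y(t)\in\la B(x(t))$, as recorded in \eqref{z(t)}. Since $\zer(A+B)\neq\empty$, picking $\x\in\zer(A+B)$ and $\y\in\la B(\x)\cap(-\la A(\x))$ and setting $z^\ast:=\x+\y$ gives $z^\ast\in\fix T$, so $\fix T\neq\empty$. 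As $T$ is firmly nonexpansive (the average of $\id$ and the nonexpansive map $R_{\la A}R_{\la B}$), so is $\id-T$, whence for any $z^\ast\in\fix T$ we get $\lr{\dot z(t),z(t)-z^\ast}=-\lr{(\id-T)z(t),z(t)-z^\ast}\le-\n{(\id-T)z(t)}^2=-\n{\dot z(t)}^2$. Consequently $\frac{d}{dt}\n{z(t)-z^\ast}^2\le-2\n{\dot z(t)}^2\le 0$, so $t\mapsto\n{z(t)-z^\ast}$ is nonincreasing and $\int_0^\infty\n{\dot z(t)}^2\,dt<+\infty$.

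In particular $z(t)$ is bounded, and since $J_{\la B}$ is nonexpansive the trajectories $x(t)=J_{\la B}(z(t))$ and $y(t)=z(t)-x(t)$ are bounded too. The vector field $T-\id$ is Lipschitz, so Lemma~\ref{lem:l2}, applied to $\dot z=(T-\id)z$ with the summability just obtained, yields $\dot z(t)\to 0$ strongly as $t\to+\infty$.

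I would then feed this into the demiclosed operator $M$ of Lemma~\ref{lem:demiclosed}. Put $u(t):=J_{\la A}(x(t)-y(t))$; the resolvent identity gives $x(t)-y(t)-u(t)\in\la A(u(t))$, while \eqref{dyn2} and \eqref{z(t)} give $\dot z(t)=u(t)-x(t)$, i.e. $u(t)=x(t)+\dot z(t)$. A direct substitution then shows $\binom{-\dot z(t)}{-\dot z(t)}\in M\binom{u(t)}{y(t)}$. If $t_n\to+\infty$ is any sequence along which $x(t_n)\wto\x$ and $y(t_n)\wto\y$ (such exist by boundedness), then $u(t_n)\wto\x$ because $\dot z(t_n)\to 0$, and the right-hand sides $\binom{-\dot z(t_n)}{-\dot z(t_n)}$ converge strongly to $0$; demiclosedness forces $\binom{0}{0}\in M\binom{\x}{\y}$, which unravels to exactly $\x\in\zer(A+B)$ and $\y\in\la B(\x)\cap(-\la A(\x))$. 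Hence every weak sequential cluster point of $(x(t),y(t))$ is a solution of the claimed form.

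The remaining, and most delicate, step is upgrading subsequential convergence to full weak convergence, the difficulty being that $J_{\la B}$ is not weakly continuous, so $z(t)\wto z^\ast$ alone does not transfer to $x(t)$. I would resolve this by an Opial argument for $z(t)$ followed by a uniqueness observation. The Fej\'er property of the first paragraph, together with the fact that every weak cluster point of $z(t)$ lies in $\fix T$ (extract a subsequence along which $x(t)$ and $y(t)$ also converge weakly and apply the previous paragraph, the cluster point being their sum), gives by Opial's lemma a single $z^\ast$ with $z(t)\wto z^\ast$. Finally, for any weak cluster point $(\x,\y)$ of $(x(t),y(t))$ we have $z(t_n)=x(t_n)+y(t_n)\wto\x+\y$, so $\x+\y=z^\ast$; combined with $\y\in\la B(\x)$ this yields $z^\ast\in(\id+\la B)(\x)$, that is $\x=J_{\la B}(z^\ast)$ and $\y=z^\ast-\x$. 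Thus the cluster point is unique, and since $(x(t),y(t))$ is bounded it converges weakly to it, giving (i) and (ii). \hfill$\square$
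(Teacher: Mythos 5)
Your proposal is correct and follows the same overall skeleton as the paper's proof: Fej\'er monotonicity of $z(t)=x(t)+y(t)$ with respect to the solution set, square-integrability of $\dot z$, Lemma~\ref{lem:l2} to force $\dot z(t)\to 0$, the lifted inclusion fed into the demiclosed operator of Lemma~\ref{lem:demiclosed} to identify cluster points, Opial's lemma applied to $z(t)$, and finally $\x=J_{\la B}(\z)$ to upgrade to full weak convergence of $x(t)$. The one genuine difference is how you obtain the dissipation inequality $\frac{d}{dt}\n{z(t)-\z}^2\le -2\n{\dot z(t)}^2$: you route everything back through the fixed-point formulation $\dot z=(T-\id)z$ and invoke firm nonexpansiveness of the Douglas--Rachford operator $T$ (together with the observation that $\x+\y\in\fix T$), whereas the paper derives the same inequality directly from monotonicity of $\la A$ and $\la B$ applied to the shadow system \eqref{dyn3}, keeping the extra nonnegative term $\lr{y(t)-\y,x(t)-\x}$ explicit. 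The two derivations are equivalent, and yours is arguably cleaner in the continuous setting (you also get boundedness of $x(t)$ for free from nonexpansiveness of $J_{\la B}$, instead of via the cross-term estimate); but the paper's monotonicity-based computation is chosen deliberately because it is the version that survives the passage to the discrete iteration \eqref{fbr}, where $x_k\ne J_{\la B}(z_k)$ and the fixed-point operator $T$ is no longer available. So your argument proves the theorem, but it would not serve the paper's stated secondary purpose of prefiguring the proof of Theorem~\ref{th:main-2}.
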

\begin{proof}		
	Let $\x\in\zer(A+B)$ and $\y\in\la B(\x)\cap(-\la A(\x))$. Denote $\z=\x+\y$ and $z(t)=x(t)+y(t)$. By using monotonicity of $\la A$ followed by monotonicity of $\la B$, we obtain
	\begin{equation}\label{eq:key cont}
	\begin{aligned}
	0  &\leq \lr{\dot{x}(t)+ y(t) +\dot{y}(t)-\y,\x-\dot{x}(t)-x(t) - \dot{y}(t)} \\
	&= \lr{\dot{z}(t),\z-z(t)}  -\n{\dot{z}(t)}^2 - \lr{y(t)-\y,x(t)-\x} \\
	&\leq  -\frac{1}{2}\frac{d}{dt}\n{z(t)-\z}^2 - \n{\dot{z}(t)}^2.
	\end{aligned}
	\end{equation}
	In particular, this shows that $\n{z(t)-\z}^2$ is decreasing,
        hence $\lim_{t\to\infty}\n{z(t)-\z}$ exists, and that
        $\int_0^\infty\n{\dot{z}(t)}^2\, dt<+\infty$. The latter combined with Lemma~\ref{lem:l2} implies that $\dot{z}(t)\to0$ as $t\to\infty$. Monotonicity of $\la B$ then yields
	\begin{align*}
	\n{z(t)-\z}^2 
	&= \n{x(t)-\x}^2 + 2\lr{x(t)-\x,y(t)-\y} + \n{y(t)-\y}^2 \\
	&\geq \n{x(t)-\x}^2 + \n{y(t)-\y}^2,
	\end{align*}  
	from which it follows that $x(t)$ is bounded. By using the definition of the resolvent $J_{\la A}$, we can express \eqref{dyn3} in the form
    \begin{equation}
    \label{eq:lift cont}
    -\binom{ \dot{z}(t) }{ \dot{z}(t) } \in \left(\begin{bmatrix}\la A\\(\la B)^{-1}\end{bmatrix}+\begin{bmatrix}
    0 & \id \\ -\id & 0 \\
    \end{bmatrix}\right)\binom{\dot{z}(t)+x(t)}{z(t)-x(t)}.
    \end{equation} 
    Let $(x,z)$ be a weak sequential cluster point of the bounded trajectory $(x(t),z(t))$. Taking the limit along this subsequence in \eqref{eq:lift cont}, using Lemma~\ref{lem:demiclosed}, and unraveling the resulting expression gives
    \begin{equation}\label{eq:key limit cont}
    \left\{\begin{aligned}
    0 &\in \la A(x)+(z-x) \\
    x &\in (\la B)^{-1}(z-x)\\
    \end{aligned}\right. 
    \quad\implies\quad
    \left\{\begin{aligned}
    x &\in \zer(A+B)\\
    z &\in x+\la B(x)\\
    \end{aligned}\right. 
    \end{equation}
    In particular, by combining \eqref{eq:key cont} with \eqref{eq:key limit cont}, we deduce that $\lim_{t\to+\infty}\n{z(t)-z}^2$ exists. Applying Opial's lemma \cite[Lemma~4]{boct2017dynamical} then shows that $z(t)$ converges weakly to a point $\z\in\x+\la B(\x)$ where $\x$ is a weak sequential cluster point of $x(t)$. 
    The definition of $J_{\la B}$ then yields $\x=J_{\la B}(\z)$, which implies that $J_{\la B}(\z)$ is the unique cluster point of $x(t)$. The trajectory $x(t)$ therefore converges weakly to a point $\x\in\zer(A+B)$. To complete the proof, simply note that $y(t)=z(t)-x(t)\wto \z-\x \in \la B(\x)\cap(-\la A(\x))$ as $t\to+\infty$.
\end{proof}

\section{From the Continuous to the Discrete}
\label{s:fdr}
In this section, we devise a new splitting algorithm by considering
different discretizations of the dynamical system \eqref{dyn3}. For the remainder of this work, we will suppose that $B$ is a single-valued operator. In this case, the system~\eqref{dyn3} simplifies to
\begin{equation}\label{dyn3-v2}
\begin{aligned}
\dot{x}(t)+x(t) &= J_{\la A}\left( x(t)- y(t)\right) -\dot{y}(t), \\
y(t) &= \la B(x(t)).
\end{aligned}
\end{equation}
In order to discretize this system, let us replace $x(t)\approx x_k$ and $y(t)\approx y_k$. As two derivatives appear in \eqref{dyn3-v2}, there are many combinations of possible discretizations. One involves using forward discretizations of both $\dot{x}(t)$ and $\dot{y}(t)$, that is,
\begin{equation}\label{dr:2forw}
\dot{x}(t) \approx x_{k+1}-x_k,\quad \dot{y}(t) \approx y_{k+1}-y_k.
\end{equation}
Under this discretization, \eqref{dyn3-v2} becomes
\begin{equation}
\label{dr normal shadows}
 x_{k+1} = J_{\la A}(x_k-\la B(x_k)) - \la \bigl(B(x_{k+1})-B(x_k)\bigr).
\end{equation}
As written, this expression does not given rise to a useful algorithm, since $x_{k+1}$ appears on both sides of the equation. However, we note that by taking $z_k=x_k+y_k=(I+\la B)x_k$ and rearranging, we obtain
\begin{equation*}
z_{k+1} = z_k + J_{\la A}(2J_{\la B}z_k-z_k) - J_{\la B}(z_k),
\end{equation*}
which is precisely the usual Douglas--Rachford
algorithm given in \eqref{dr-1}.

To derive a new algorithm, we consider a different discretization of
\eqref{dyn3-v2}. To this end, we perform a forward discretization of
$\dot{x}(t)$ and a backward discretization of $\dot{y}(t)$, that is,
\begin{equation}\label{dr:1back}
\dot{x}(t) \approx x_{k+1}-x_k,\quad \dot{y}(t) \approx y_{k}-y_{k-1},
\end{equation}
Under this discretization, \eqref{dyn3} becomes
\begin{equation}
\label{fbr}
x_{k+1} = J_{\la A }\bigl(x_k - \la B(x_k)\bigr) - \la \bigl(B(x_k)-
B(x_{k-1})\bigr).
\end{equation}
Although not surprising, it is interesting to note that \eqref{dr normal shadows} and \eqref{fbr} only differ in the indices which appear in the last two terms. In particular, in this expression, $x_{k+1}$ does not appear on the right-hand side.

Before turning our attention to the convergence properties of this iteration, we make the following remark.

\begin{remark}
Backward/forward discretizations of a derivative usually correspond to the same type of step in their discrete counterpart of the algorithms. This is, for instance, the case for the forward-backward method which includes the discussion from Section~\ref{simple:1} as a special case. It is curious to note, however, that forward (resp.\ backward) discretization gave rise to backward  (resp.\ forward) operators in the discrete counterparts. In particular, two forward discretizations of \eqref{dyn3-v2} gave rise the Douglas--Rachford algorithm which has two backward steps whereas one forward and one backward discretization produced a method also having one forward and one backward step. 
\end{remark}	

We now prove the following preparatory lemma, which might be interesting in its
own right due to the very general form of the recurrent relation.
\begin{lemma}\label{lemma:abs}
    Let $A\colon \Hilbert \setto \Hilbert$ be a maximal monotone
    operator and let $(y_k)\subset \Hilbert$ be an arbitrary sequence. Let $x_0\in\Hilbert$ and consider $(x_k)$ defined by
    \begin{equation}
        \label{abstr_seq}
             x_{k+1} = J_{A}(x_k-y_k) - (y_k - y_{k-1}),\quad \forall k\in\mathbb{N}.
     \end{equation}
     Then, for all $x\in \Hilbert$ and $y \in -A(x)$, we have
\begin{align}
  \label{eq:disc1_abs}
  \|(x_{k+1} & +y_k)  - (x+y)\|^2  \leq
               \n{(x_{k}+y_{k-1}) - (x+y)}^2 - 2 (y_k - y, x_k - x)
               \nonumber \\
             & + 4\lr{y_k-y_{k-1}, x_k-x_{k+1}} -\n{x_{k+1}-x_k}^2
               -3\n{y_k-y_{k-1}}^2.
\end{align}
\end{lemma}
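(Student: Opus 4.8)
The plan is to reduce the entire estimate to a single application of the monotonicity of $A$. First I would set $u_{k+1}:=J_A(x_k-y_k)$, so that \eqref{abstr_seq} becomes $x_{k+1}=u_{k+1}-(y_k-y_{k-1})$ and, by the definition of the resolvent $J_A=(\id+A)^{-1}$, the pair $\bigl(u_{k+1},\,(x_k-y_k)-u_{k+1}\bigr)$ belongs to $\gra A$. The hypothesis $y\in-A(x)$ says precisely that $(x,-y)\in\gra A$, so monotonicity of $A$ applied to these two graph points yields
\[
\lr{u_{k+1}-x,\ (x_k-y_k)-u_{k+1}+y}\geq 0.
\]
This one inequality will carry the whole argument; everything else is algebra.

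Second, I would separate the ``distance'' part from the monotonicity part. Writing $p_k:=x_k+y_{k-1}$ and $p:=x+y$, the left-hand side of \eqref{eq:disc1_abs} is $\n{p_{k+1}-p}^2$ (with $p_{k+1}=x_{k+1}+y_k$) and its leading right-hand term is $\n{p_k-p}^2$. The elementary identity $\n{p_{k+1}-p}^2=\n{p_k-p}^2+\n{p_{k+1}-p_k}^2+2\lr{p_{k+1}-p_k,\,p_k-p}$ then reduces the claim to showing that the two correction terms $\n{p_{k+1}-p_k}^2+2\lr{p_{k+1}-p_k,\,p_k-p}$ are bounded above by $-2\lr{y_k-y,x_k-x}+4\lr{y_k-y_{k-1},x_k-x_{k+1}}-\n{x_{k+1}-x_k}^2-3\n{y_k-y_{k-1}}^2$. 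Since $p_{k+1}-p_k=(x_{k+1}-x_k)+(y_k-y_{k-1})$ and $p_k-p=(x_k-x)+(y_{k-1}-y)$, both correction terms live naturally in the increments $x_{k+1}-x_k$, $y_k-y_{k-1}$ and the errors $x_k-x$, $y_{k-1}-y$.

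Third, I would substitute $u_{k+1}=x_{k+1}+(y_k-y_{k-1})$ into the monotonicity inequality and expand it in these same four quantities. The claim, which I have verified term by term, is that the expanded monotonicity inequality is \emph{exactly equivalent} to the required bound on the correction terms, with no term discarded: every inner product and every squared-norm coefficient matches, including the factor $4$ in $4\lr{y_k-y_{k-1},x_k-x_{k+1}}$ and the coefficient $-3$ in $-3\n{y_k-y_{k-1}}^2$. Combining this identification with the distance identity of the second step produces \eqref{eq:disc1_abs} directly.

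The main obstacle is purely one of organized bookkeeping. There is no conceptual difficulty beyond the single invocation of monotonicity; the work lies in expanding a moderately large collection of inner-product terms and checking that the shift relating $u_{k+1}$ to $x_{k+1}$ redistributes the cross terms so as to generate precisely the coefficients $4$ and $-3$. Naming the increments and errors compactly and collecting everything into a common basis of inner products makes the exact cancellation transparent and guards against sign errors.
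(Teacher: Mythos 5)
Your proposal is correct and follows essentially the same route as the paper: a single application of the monotonicity of $A$ to the graph points $\bigl(J_A(x_k-y_k),\,(x_k-y_k)-J_A(x_k-y_k)\bigr)$ and $(x,-y)$, followed by algebraic expansion, and indeed the expanded monotonicity inequality is exactly equivalent to \eqref{eq:disc1_abs} with no term discarded. The only difference is cosmetic bookkeeping --- you organize the expansion around the three-point identity for $\n{(x_{k+1}+y_k)-(x+y)}^2$ in a basis of increments and errors, whereas the paper expands the inner product directly and regroups via three polarization identities.
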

\begin{proof}
By the definition of the resolvent and \eqref{abstr_seq}, it follows that
\begin{equation}\label{eq:mono}
	x_{k+1}-x_k + y_k + ( y_k - y_{k-1}) \in - A\bigl(x_{k+1} + y_k-y_{k-1}\bigr).
\end{equation}
Since $-y \in A(x)$ and $A$ is monotone, we have
\begin{equation*}
	0\leq \lr{x_{k+1}-x_k + y_k +
		(y_k-y_{k-1}) - y, x - x_{k+1}- (y_k-y_{k-1}) },
\end{equation*}
which is equivalent to
\begin{multline}\label{eq:new_terms}
	0\leq \lr{x_{k+1}-x_k,x-x_{k+1}} + \lr{y_k-y,x-x_{k+1}} + \lr{y_k-y_{k-1},x-x_{k+1}} \\+ \lr{y_k -
		y_{k-1},x_k - x_{k+1}} + \lr{y_k-y_{k-1}, y- y_k}-
            \n{y_k - y_{k-1}}^2 .
\end{multline}
To simplify \eqref{eq:new_terms}, we note that
\begin{align*}
  2\lr{x_{k+1}-x_k, x-x_{k+1}} & = \n{x_{k}-x}^2 -
                                 \n{x_{k+1} -x_k}^2 - \n{x_{k+1}
                                 -x}^2,\\ 
  2 \lr{y_k-y_{k-1}, y-y_k } & = \n{y_{k-1} -y}^2 -\n{y_k-y_{k-1}}^2 
                               -\n{y_k -y}^2 ,\\
  \lr{y_k-y_{k-1}, x-x_{k+1}} & =\lr{y_k-y_{k-1},x_k-x_{k+1}}\\
                               &  \phantom{\qquad} +\lr{y_{k-1}-y, x_k-x}   +\lr{y-y_k, x_k-x}.
\end{align*}
Now, using the above three identities in \eqref{eq:new_terms}, we
obtain
\begin{multline}\label{eq:last_in_lemma}
    \n{x_{k+1}-x}^2 + 2\lr{y_k-y, x_{k+1}-x} + \n{y_k-y}^2 \leq
    \n{x_{k}-x}^2 + 2\lr{y_{k-1}-y, x_{k}-x} +  \n{y_{k-1}-y}^2 \\
    + 4\lr{y_k-y_{k-1}, x_k-x_{k+1}} -\n{x_{k+1}-x_k}^2 -
    3\n{y_k-y_{k-1}}^2 - 2\lr{y_k-y, x_k-x}.
\end{multline}

The equivalence between the last inequality and \eqref{eq:disc1_abs}
is now obvious.
\end{proof}

Since \eqref{dr:1back} is of the form specified by
Lemma~\ref{lemma:abs}, this lemma suggests one possible way to prove
convergence of \eqref{dr:1back}: the quantity
$\n{x_k+y_{k-1} - x - y}^2$ will be decreasing if the other terms in
the right hand-side of \eqref{eq:disc1_abs} can be estimated
appropriately. The following theorem, which is our main result
regarding convergence of \eqref{fbr}, makes use of this observation.
\begin{theorem}\label{th:main-2}
    Let $A:\Hilbert\setto\Hilbert$ be maximally monotone and
    $B:\Hilbert\to\Hilbert$ be monotone and $L$-Lipschitz with
    $\zer(A+B)\neq\empty$. Let $\e>0$,
    $\la \in \left[\e,\frac{1-3\e}{3L}\right]$ and let
    $x_0,x_{-1}\in\Hilbert$. Then the sequence $(x_k)$, generated by
    \eqref{fbr}, satisfies
        \begin{enumerate}[(i)]
            \item $(x_k)$ converges weakly to a point
            $\overline{x}\in \zer(A+B)$.
            \item $(B(x_k))$ converges weakly to $B(\bar{x})$.
        \end{enumerate}
\end{theorem}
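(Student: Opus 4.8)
The plan is to apply Lemma~\ref{lemma:abs} with $y_k=\la B(x_k)$, so that iteration \eqref{fbr} matches \eqref{abstr_seq} with $A$ replaced by $\la A$. Fix $\x\in\zer(A+B)$ and set $\y=\la B(\x)$, so that $-\y\in\la A(\x)$, i.e.\ the pair $(\x,\y)$ satisfies the hypothesis of the lemma. I would abbreviate $u_k:=x_k+y_{k-1}$ and $\phi_k:=\n{u_k-(\x+\y)}^2$, so that \eqref{eq:disc1_abs} reads
\begin{equation*}
\phi_{k+1}\leq\phi_k-2\lr{y_k-\y,\,x_k-\x}+4\lr{y_k-y_{k-1},\,x_k-x_{k+1}}-\n{x_{k+1}-x_k}^2-3\n{y_k-y_{k-1}}^2.
\end{equation*}

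The first step is to control the sign of the cross term $-2\lr{y_k-\y,x_k-\x}=-2\la\lr{B(x_k)-B(\x),x_k-\x}$, which is $\leq0$ by monotonicity of $B$; hence this term is harmless and may be discarded. The crux is to absorb the indefinite inner-product term $4\lr{y_k-y_{k-1},x_k-x_{k+1}}$ into the two negative square terms. First I would bound $\n{y_k-y_{k-1}}=\la\n{B(x_k)-B(x_{k-1})}\leq\la L\n{x_k-x_{k-1}}$ using Lipschitz continuity of $B$, and then estimate the cross term via Cauchy--Schwarz and Young's inequality, splitting it so that part is absorbed by $-\n{x_{k+1}-x_k}^2$ and part produces a $\n{x_k-x_{k-1}}^2$ term that can be telescoped against the $-3\n{y_k-y_{k-1}}^2$ term at the \emph{previous} index. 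Concretely, I expect to arrive at a genuine Lyapunov inequality of the form
\begin{equation*}
\psi_{k+1}\leq\psi_k-c\,\n{x_{k+1}-x_k}^2-c'\,\n{x_k-x_{k-1}}^2,
\end{equation*}
where $\psi_k=\phi_k+\mu\n{x_k-x_{k-1}}^2$ for a suitable $\mu>0$ and constants $c,c'>0$; the condition $\la\leq\frac{1-3\e}{3L}$ (equivalently $3\la L\leq 1-3\e<1$) is precisely what makes the coefficients work out to be strictly positive. This summability step is where the main difficulty lies: pinning down the right auxiliary coefficient $\mu$ and verifying the sign of the resulting quadratic form in $(\n{x_{k+1}-x_k},\n{x_k-x_{k-1}})$ is the delicate bookkeeping that forces the stepsize restriction.

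From the Lyapunov inequality, standard arguments finish the proof. Monotonicity of $\psi_k$ gives that $(\psi_k)$, and hence $(\phi_k)$ and $(x_k)$, are bounded, and summing yields $\n{x_{k+1}-x_k}\to0$ and $\n{x_k-x_{k-1}}\to0$; consequently $y_k-y_{k-1}=\la(B(x_k)-B(x_{k-1}))\to0$ as well. The next step is a demiclosedness argument mirroring the continuous case: I would rewrite \eqref{fbr} in the lifted form
\begin{equation*}
-\binom{x_{k+1}-x_k+y_k-y_{k-1}}{y_k-y_{k-1}}\in\left(\begin{bmatrix}\la A\\(\la B)^{-1}\end{bmatrix}+\begin{bmatrix}0&\id\\-\id&0\end{bmatrix}\right)\binom{x_{k+1}+y_k-y_{k-1}}{\,y_k\,}
\end{equation*}
and take a weak sequential cluster point $\x$ of $(x_k)$; since the left-hand side tends strongly to $0$ and the arguments converge weakly to $(\x,\la B(\x))$, Lemma~\ref{lem:demiclosed} yields $\x\in\zer(A+B)$. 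Finally, to upgrade this to weak convergence I would invoke Opial's lemma: for \emph{every} $\x\in\zer(A+B)$ the limit $\lim_k\phi_k$ exists (since $(\psi_k)$ converges and the correction term vanishes), and combined with the fact that every weak cluster point lies in $\zer(A+B)$, Opial's lemma delivers weak convergence of $(u_k)=(x_k+y_{k-1})$, hence of $(x_k)$, to a single point $\x\in\zer(A+B)$. Part~(ii) then follows because $B(x_k)\wto B(\x)$: weak convergence of $(x_k)$ together with $B(x_k)-B(x_{k-1})\to0$ and the demiclosedness/maximal monotonicity of $B$ identifies the weak limit of $(B(x_k))$ as $B(\x)$.
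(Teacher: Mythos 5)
Your plan follows essentially the same route as the paper's proof: Lemma~\ref{lemma:abs} applied to $\la A$ with $y_k=\la B(x_k)$, monotonicity of $B$ to discard the cross term $-2\lr{y_k-\y,x_k-\x}$, a Young-plus-Lipschitz estimate of $4\lr{y_k-y_{k-1},x_k-x_{k+1}}$ producing a Lyapunov function of the form $\n{z_k-z}^2+\mu\n{x_k-x_{k-1}}^2$, and then demiclosedness of the lifted operator followed by Opial's lemma. The ``delicate bookkeeping'' you defer is realized in the paper by splitting $4\lr{y_k-y_{k-1},x_k-x_{k+1}}$ as $2\lr{\cdot}+2\lr{\cdot}$ and bounding one half by $\tfrac13\n{x_{k+1}-x_k}^2+3\n{y_k-y_{k-1}}^2$ (which exactly cancels the $-3\n{y_k-y_{k-1}}^2$ term) and the other by $\la L\bigl(\n{x_k-x_{k-1}}^2+\n{x_{k+1}-x_k}^2\bigr)$; this gives $\mu=\tfrac13$, $c=\e$, $c'=0$, and is where $\la L\le\tfrac13-\e$ enters. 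So the target of your plan is correct and attainable.

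Two steps, however, are wrong or incomplete as stated. First, the second component of your lifted inclusion is false: with second argument $y_k$, your left-hand side $-(y_k-y_{k-1})$ unravels to $x_{k+1}\in(\la B)^{-1}(y_k)$, whereas what actually holds is $x_k\in(\la B)^{-1}(y_k)$. The correct left-hand side is $-(z_{k+1}-z_k)=-(x_{k+1}-x_k+y_k-y_{k-1})$ in \emph{both} components (as in \eqref{eq:lift inclusion}); since this still tends strongly to $0$, the demiclosedness argument goes through once corrected. Second, boundedness of $(x_k)$ does not follow directly from boundedness of $\phi_k$, which only bounds $z_k=x_k+y_{k-1}$ where $y_{k-1}$ depends on $x_{k-1}$; the paper obtains it from $x_k=J_{\la B}\bigl(z_k-(y_{k-1}-y_k)\bigr)$ together with nonexpansiveness of $J_{\la B}$ and $\n{y_k-y_{k-1}}\to0$. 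Relatedly, weak convergence of $(z_k)$ does not by itself give weak convergence of $(x_k)$: one needs the extra observation that every weak cluster point of $(x_k)$ equals $J_{\la B}(\z)$ and is therefore unique. Finally, for part (ii) your appeal to demiclosedness of $B$ is misplaced (demiclosedness is a weak--strong closedness property, and here both $x_k$ and $B(x_k)$ converge only weakly); the clean identification is simply $y_{k-1}=z_k-x_k\wto\z-\x=\la B(\x)$.
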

\begin{proof}
    Let $x\in\zer(A+B)$ and set $y=\la B(x)\in -\la A(x)$. Since
    \eqref{fbr} of the form specified by \eqref{abstr_seq}, we apply
    Lemma~\ref{lemma:abs} to the monotone operator $\la A$ with
    $y_k=\la B(x_k)$ to deduce that the inequality
    \eqref{eq:disc1_abs} holds. Now, using that $B$ is monotone, we
    have $\lr{y_k-y, x_k-x}\geq 0$ and hence
\begin{align}\label{eq:disc1} 
  \|(x_{k+1} & +y_k)  - (x+y)\|^2  \leq
               \n{(x_{k}+y_{k-1}) - (x+y)}^2 
               \nonumber \\
             & + 4\lr{y_k-y_{k-1}, x_k-x_{k+1}} -\n{x_{k+1}-x_k}^2
               -3\n{y_k-y_{k-1}}^2.
\end{align}
Next, we estimate the inner-product in the last line of \eqref{eq:disc1}. To this end, note that Young's inequality gives
\begin{equation}\label{eq:est1}
	2 \lr{y_k-y_{k-1},x_k-x_{k+1}}\leq \frac 1 3 \n{x_{k+1}-x_k}^2 + 3 \n{y_k-y_{k-1}}^2,
\end{equation}
and that Lipschitzness of $B$ yields
\begin{align}\label{eq:est2}
	2 \lr{y_k-y_{k-1},x_k-x_{k+1}}
	& \leq \la L \bigl(\n{x_{k}-x_{k-1}}^2 + \n{x_{k+1}-x_k}^2\bigr).
\end{align}
Combing these two estimates with \eqref{eq:disc1} gives the inequality
      \begin{align*}
        \n{x_{k+1}+y_k -x-y}^2 & + \left(\frac 2 3 -\la L\right)\n{x_{k+1}-x_k}^2\\ &\leq 
        \n{x_{k}+ y_{k-1}-x -y}^2 + \la L \n{x_k-x_{k-1}}^2.
      \end{align*}
By denoting $z_{k}=x_k+y_{k-1}$ and $z=x+y$, the previous inequality implies
      \begin{equation}\label{eq:key}
      	\n{z_{k+1} -z}^2  + \left(\frac{1}{3}+\e\right)\n{x_{k+1}-x_k}^2\leq 
      	\n{z_k - z}^2 + \frac{1}{3} \n{x_k-x_{k-1}}^2,
      \end{equation} 
which telescopes to yield 
\begin{equation*}
  \n{z_{k+1} -z}^2  + \frac{1}{3}\n{x_{k+1}-x_k}^2 +\e\sum_{i=1}^k\n{x_{i+1}-x_i}^2 
  \leq \n{z_1 - z}^2 + \frac{1}{3} \n{x_1-x_{0}}^2.
 \end{equation*}    
From this, it follows that $(z_k)$ is bounded and that
${\n{x_k-x_{k-1}}\to 0}$. The latter, together with Lipschitz continuity of $B$, implies  ${\n{y_{k}-y_{k-1}}\to 0}$ and, consequently, we also have that $\n{z_k-z_{k-1}}\to 0$. Since $z_k = (\id+\la B)x_k + (y_{k-1}-y_k)$, we have
  $$ x_k = J_{\la B}\left( z_k  - (y_{k-1}-y_k) \right).$$
Since $(z_k)$ is bounded, $\n{y_k-y_{k-1}}\to 0$ and $J_{\la B}$ is nonexpansive, it then follows that the sequence $(x_k)$ is also bounded. Also, due to \eqref{eq:key}, we see that the following limit exits $$\lim_{k\to\infty}\left(\n{z_k-z}^2+\frac{1}{3}\n{x_{k+1}-x_k}^2\right)=\lim_{k\to\infty}\n{z_k-z}^2.$$
Now, by using the definition of the resolvent $J_{\la A}$, we can express \eqref{eq:mono} in the form
\begin{equation}
\label{eq:lift inclusion}
-\binom{ z_{k+1}-z_k }{ z_{k+1}-z_k } \in \left(\begin{bmatrix}\la A\\(\la B)^{-1}\end{bmatrix}+\begin{bmatrix}
           0 & \id \\ -\id & 0 \\
         \end{bmatrix}\right)\binom{z_{k+1}-z_k+x_k}{z_{k+1}-x_{k+1}}.
\end{equation} 
Let $(x,z)$ be a weak cluster point of the bounded sequence $(x_k,z_k)$. Taking the limit along this subsequence in \eqref{eq:lift inclusion},  using Lemma~\ref{lem:demiclosed}, and unravelling the resulting expression gives
    \begin{equation}\label{limit}
    \left\{\begin{aligned}
    0 &\in \la A(x)+(z-x) \\
    x &\in (\la B)^{-1}(z-x)\\
    \end{aligned}\right. 
    \quad\implies\quad
    \left\{\begin{aligned}
    x &\in \zer(A+B)\\
    z &\in x+\la B(x)\\
    \end{aligned}\right. 
    \end{equation}
Applying Opial's Lemma \cite[Lemma~2.39]{BC2010} then follows that $(z_k)$ converges weakly to a point $\bar{z}=\x+\la B(\x)$ where $\x$ is weak cluster point of $(x_k)$. But then the definition of $J_{\la B}$ yields that $\x=J_{\la B}(\z)$ which implies that $J_{\la B}(\z)$ is the unique cluster point of $(x_k)$. The sequence $(x_k)$ therefore converges weakly to a point $\x\in\zer(A+B)$. To complete the proof, simply note that
 $y_{k-1}=z_{k}-x_k\wto \bar{z}-\bar{x} = \la B(\bar{x})$ as $k\to\infty$.
\end{proof}

Some remarks regarding Theorem~\ref{th:main-2} and its proof are in order.

\begin{remark}[Continuous and discrete proofs]
    The sequence $z_{k}=x_k+y_{k-1}$ plays a similar role in
    Theorem~\ref{th:main-2} to the trajectory $z(t)=x(t)+y(t)$ in
    Theorem~\ref{th:main-1}. This does however highlight a subtle
    difference between the two proofs~---~in the discrete case, we
    have $x_k=J_{\la B}\bigl(z_k+(y_k-y_{k-1})\bigr)$ whereas, in the
    continuous case, we have $x(t)=J_{\la B}(z(t))$. Note also that
    although our combination of the estimates \eqref{eq:est1} and
    \eqref{eq:est2} for $\lr{y_k-y_{k-1},x_k-x_{k+1}}$ may appear
    somewhat arbitrary, the combination of these two inequalities is
    in fact optimal.
\end{remark}    

\begin{remark}\label{r:stepsize}
Although we were unable to prove so in Theorem~\ref{th:main-2}, we conjecture that the interval in which $\la$ lies can be extended to $\la\in(0,\frac{1}{2L})$.
Our original motivation for considering the continuous dynamical system \eqref{dyn3} did not arise from its connection to the Douglas--Rachford algorithm, but rather it from its connection to the operator splitting method studied in~\cite{malitsky2018forward} given by
\begin{equation}
    \label{siopt}
        x_{k+1} = J_{\la A }\bigl(x_k - \la B(x_k)- \la (B(x_k)-
B(x_{k-1})) \bigr).
\end{equation}
Note that the iterations \eqref{fbr} and \eqref{siopt} look very
similar and, in fact, coincide if $J_A$ is the identity operator. For \eqref{siopt}, convergence has been established when $\la < \frac{1}{2L}$, which is slightly better than for \eqref{fbr}. Thus, in the case that $A=0$, this provides some evidence for the conjecture.

On the other hand, the analysis of dynamical systems corresponding to \eqref{siopt} is more complicated. In particular, a natural candidate for a continuous analogue of \eqref{siopt} is given by
  \begin{equation}\label{dyn4}
	\begin{aligned}
	\dot{x}(t)+x(t) &= J_{\la A}\left( x(t)- y(t) -\dot{y}(t)\right). \\
	y(t) &= \la B(x(t)).
      \end{aligned}
    \end{equation}
Because we are unable to couple the derivatives $\dot{x}(t)$ and $\dot{y}(t)$ in \eqref{dyn4} in general, it is not clear how to prove existence of its trajectory $x(t)$. 
\end{remark}

\section{Primal-Dual Algorithms}\label{sec:pd}
In this section, we use Lemma~\ref{lemma:abs} from Section~\ref{s:fdr} to analyse a new primal-dual algorithm. Consider the bilinear convex-concave saddle point problem
\begin{equation}
    \label{saddle}
    \min_{u\in \Hilbert_1} \max_{v\in \Hilbert_2} \, g(u) + \lr{Ku,v} - f^*(v),
\end{equation}
where $g\colon \Hilbert_1\to (-\infty, +\infty]$,
$f\colon \Hilbert_2 \to (-\infty, +\infty]$ are proper convex lsc
functions, $K\colon \Hilbert_1 \to \Hilbert_2$ is a bounded linear
operator with norm $\n{K}$, and $f^*$ denotes the Fenchel conjugate of $f$.
A popular method to solve this problem is the \emph{primal-dual
method}~\cite{chambolle2011first} defined by
\begin{equation}\label{pdhg}
\begin{aligned}
  u_{k+1} & = \prox_{\t g}(u_k - \t K^*v_k)\\
  v_{k+1} &= \prox_{\s f^*}(v_k + \s K(2u_{k+1}-u_k)).
\end{aligned}
\end{equation}
Under the assumption that the solution set of \eqref{saddle} is non-empty
and that $\t \s \n{K}^2 < 1$, one can prove that the sequence $(u_k, v_k)$ weakly
converges to a saddle point of \eqref{saddle}.

In spirit of \eqref{fbr}, we propose the following novel primal-dual algorithm:
\begin{equation}
    \label{new_pd}
\begin{aligned}
  u_{k+1} & = \prox_{\t g}(u_k - \t K^*v_k)\\
  v_{k+1} &= \prox_{\s f^*}(v_k + \s Ku_{k+1}) + \s (Ku_{k+1}-Ku_k).
\end{aligned}
\end{equation}
In the following theorem, we prove convergence of this algorithm. As one can see,
the conditions required for its convergence are exactly the same as for \eqref{pdhg}.
Rather than present the full proof, we will only focus on the
most important ingredient~--- the fact that $(u_k)$, $(v_k)$ remain
bounded. One this is established, the rest of the proof follows the standard argument, as in Theorem~\ref{th:main-2}.
\begin{theorem}
    Let $g\colon \Hilbert_1 \to (-\infty, +\infty]$,
    $f\colon \Hilbert_2\to (\infty, +\infty]$ be proper convex lsc
    functions and $K\colon \Hilbert_1 \to \Hilbert_2$ be a bounded
    linear operator with norm $\n{K}$ such that the solution set of
    \eqref{saddle} is nonempty. Let $\t \s \n{K}^2 < 1$, let
    $u_0\in \Hilbert_1$, and let $v_0\in \Hilbert_2$. Then the sequence $(u_k, v_k)$,
    generated by \eqref{new_pd}, converges  weakly to a solution of
    \eqref{saddle}.
\end{theorem}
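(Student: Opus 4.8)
The plan is to recast the saddle point problem \eqref{saddle} as a monotone inclusion in the product space $\Hilbert_1\times\Hilbert_2$ and then to reuse the machinery built for Theorem~\ref{th:main-2}. Writing $x=(u,v)$, a point solves \eqref{saddle} precisely when $0\in(A+B)(x)$, where $A\colon(u,v)\mapsto(\partial g(u),\partial f^*(v))$ is maximally monotone (a separable subdifferential, so that $J_A$ is the pair of proximal maps) and $B\colon(u,v)\mapsto(K^*v,-Ku)$ is a bounded skew-symmetric, hence monotone, operator that is $\n{K}$-Lipschitz. The proximal steps in \eqref{new_pd} are exactly the block resolvent of $A$ and the forward evaluations of $B$, so \eqref{new_pd} is, up to the bookkeeping forced by the two step sizes $\t,\s$ and by the Gauss--Seidel evaluation order (the $v$-update is taken at the already-updated iterate $u_{k+1}$), an instance of the abstract recursion \eqref{abstr_seq}. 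Accordingly, the first task is to pin down the correct geometry on $\Hilbert_1\times\Hilbert_2$ and the auxiliary sequence $(y_k)$ (the running forward evaluations of $B$, carrying the correction $\s(Ku_{k+1}-Ku_k)$) so that Lemma~\ref{lemma:abs} applies and delivers the estimate \eqref{eq:disc1_abs}.

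Granted this, I would follow the proof of Theorem~\ref{th:main-2} step by step. Monotonicity of $B$ makes the term $-2\lr{y_k-y,x_k-x}$ nonpositive, so it can be discarded exactly as in \eqref{eq:disc1}. The remaining cross term $4\lr{y_k-y_{k-1},x_k-x_{k+1}}$ is then controlled by combining a Young inequality with the Lipschitz bound, the analogues of \eqref{eq:est1} and \eqref{eq:est2}, in which the Lipschitz constant is now $\n{K}$. Collecting the estimates produces a telescoping inequality in the spirit of \eqref{eq:key} for the shifted quantity $z_k=x_k+y_{k-1}$; here the hypothesis $\t\s\n{K}^2<1$ is precisely what keeps the coefficient multiplying $\n{x_{k+1}-x_k}^2$ positive, so that the inequality telescopes. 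From this one reads off that $(z_k)$ is bounded and that $\n{x_{k+1}-x_k}\to0$, whence, via Lipschitz continuity of $B$ and nonexpansiveness of the relevant resolvent, $(u_k)$ and $(v_k)$ are bounded. This is the boundedness statement on which the theorem hinges.

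The remainder is routine and parallels Theorem~\ref{th:main-2}. Lifting \eqref{new_pd} to the inclusion form \eqref{eq:lift inclusion}, passing to a weak cluster point of the bounded sequence $(x_k,z_k)$, and invoking demiclosedness of the lifted operator (Lemma~\ref{lem:demiclosed} applied to the present $A$ and $B$) identifies every weak cluster point as a zero of $A+B$, that is, a solution of \eqref{saddle}; Opial's lemma then upgrades this to weak convergence of the whole sequence, and $y_{k-1}=z_k-x_k$ converges to the corresponding dual part.

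The main obstacle is the identification in the first paragraph. Unlike \eqref{fbr}, the iteration \eqref{new_pd} is genuinely asymmetric --- only the dual variable carries a correction, the two blocks use different step sizes, and the dual step is evaluated at $u_{k+1}$ rather than $u_k$. Fitting this Gauss--Seidel, two-metric scheme into the single-operator, symmetric template of Lemma~\ref{lemma:abs} is where the care is needed: one must choose the inner product on $\Hilbert_1\times\Hilbert_2$ and the sequence $(y_k)$ so that simultaneously $B$ is monotone and $\n{K}$-Lipschitz in that geometry, the sign of $\lr{y_k-y,x_k-x}$ comes out right, and the cross-term estimate closes exactly at the threshold $\t\s\n{K}^2<1$. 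Once this identification is made, the rest is a transcription of the proof of Theorem~\ref{th:main-2}.
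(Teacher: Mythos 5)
Your high-level plan (reduce to a product-space monotone inclusion with the skew operator $B(u,v)=(K^*v,-Ku)$ and apply Lemma~\ref{lemma:abs} once, then transcribe the proof of Theorem~\ref{th:main-2}) is not what the paper does, and the step you yourself flag as ``the main obstacle'' is a genuine gap, not a matter of bookkeeping. The iteration \eqref{new_pd} is \emph{not} an instance of \eqref{abstr_seq} on $\Hilbert_1\times\Hilbert_2$: the $u$-block carries no correction term at all, the $v$-block's forward evaluation sits at $u_{k+1}$ rather than $u_k$, and the two blocks use different step sizes. If you force it into the template with $y_k=(\t K^*v_k,\,-\s Ku_{k+1})$, then $y_k\neq \Lambda B(x_k)$, the term $-2\lr{y_k-y,x_k-x}$ no longer has a sign (it leaves a residual $2\lr{v_k-v,K(u_{k+1}-u_k)}$ whose factor $\n{v_k-v}$ does not vanish), and the argument of \eqref{eq:disc1} breaks down. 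Moreover, even if the identification went through, a verbatim transcription of Theorem~\ref{th:main-2} would impose the analogue of $\la<\frac{1}{3L}$, i.e.\ roughly $\sqrt{\t\s}\,\n{K}<\frac13$, which is strictly weaker than the claimed threshold $\t\s\n{K}^2<1$; so your assertion that this condition ``is precisely what keeps the coefficient positive'' in an \eqref{eq:key}-type inequality cannot be right for the single-application route.

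The paper's actual mechanism is different and exploits the asymmetry rather than fighting it: Lemma~\ref{lemma:abs} is applied \emph{twice}, once per block. For the $u$-block one takes $y_k=y_{k-1}=\t K^*v_k$, so the correction terms $4\lr{y_k-y_{k-1},x_k-x_{k+1}}$ and $-3\n{y_k-y_{k-1}}^2$ vanish identically and one gets the clean inequality \eqref{eq:pd-1}; for the $v$-block one takes $y_k=-\s Ku_{k+1}$, $y_{k-1}=-\s Ku_k$, giving \eqref{eq:pd-2}. Multiplying by $1/\t$ and $1/\s$ and summing makes the two bilinear cross terms $2\lr{K^*(v_k-v),u_{k+1}-u}$ and $2\lr{K(u_{k+1}-u),v_k-v}$ cancel exactly (both are evaluated at $u_{k+1}$ --- this is where the Gauss--Seidel ordering is essential), and the remaining correction terms from the $v$-block alone are absorbed by Young's inequality under exactly $\t\s\n{K}^2<1$, yielding the Lyapunov inequality \eqref{pd:final} and hence boundedness. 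If you want to salvage a product-space viewpoint, you would need to build this weighted metric and the per-block choice of $(y_k)$ into the reduction from the start; as written, your proposal defers precisely the step on which the theorem hinges.
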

\begin{proof}
    Let $(u, v)$ be a saddle point of \eqref{saddle}. Then the first-order
    optimality conditions give $ -K^*v \in \partial g(u)$
    and $Ku \in \partial f^*(v)$. By applying Lemma~\ref{lemma:abs} for a fixed $k\in \N$ with
    \[A = \t \partial g,\ x_k = u_k,\ y_k = y_{k-1} = \t K^*v_k,\ x=u,\
        y = \t K^*v, \]
    we obtain
    \begin{equation}
  \label{eq:pd-1}
      \n{u_{k+1}  - u}^2  + 2  \t\lr{K^* v_k - K^*v, u_{k+1}-u} \leq     \|u_{k} -u \|^2 -\n{u_{k+1}-u_k}^2,
  \end{equation}
  where, instead of \eqref{eq:disc1_abs}, we used its equivalent
  form~\eqref{eq:last_in_lemma}. Similarly, by applying
  Lemma~\ref{lemma:abs}  for a fixed $k\in \N$ with
    \[A = \s \partial f^*,\ x_k = v_k,\ y_k = -\s Ku_{k+1},\  y_{k-1}
        = -\s Ku_k, \ x= v, \ y = -\s Ku,  \]
    we obtain
    \begin{multline}
        \label{eq:pd-2}
        \n{(v_{k+1}- \s Ku_{k+1}) - (v - \s Ku)}^2  \leq
        \n{(v_{k}- \s Ku_{k}) - (v - \s Ku)}^2 + 2\s \lr{K(u_{k+1}-u),v_k-v}\\
        -\n{v_{k+1}-v_k}^2 - 3\s^2 \n{K(u_{k+1}-u_k)}^2  -
        4\s \lr{Ku_{k+1} - Ku_k, v_k-v_{k+1}} .
    \end{multline}
    
By applying Young's inequality and using the inequality $\t \s \n{K}^2 <1$, we have
\begin{multline}\label{pd:est}
    -4 \lr{Ku_{k+1} - Ku_k, v_k-v_{k+1}} \leq 4\s \n{Ku_{k+1}-Ku_k}^2+\frac{1}{\s}\n{v_{k+1}-v_k}^2 
    \\ \leq \frac{1}{\t}\n{u_{k+1}-u_k}^2 +\frac{1}{\s}\n{v_{k+1}-v_k}^2 +
    3\s \n{K(u_{k+1}-u_k)}^2.
\end{multline}
Now, multiplying \eqref{eq:pd-1} by $1/\t$, \eqref{eq:pd-2} by
$1/\s$, summing these two inequalities, and then using the estimate \eqref{pd:est} yields
\begin{multline}
    \label{pd:final}
    \frac 1 \t \n{u_{k+1} - u}^2 + \frac 1 \s \n{(v_{k+1}- \s
        Ku_{k+1}) - (v - \s Ku)}^2 \\ \leq \frac 1 \t \n{u_{k} -
        u}^2 + \frac 1 \s \n{(v_{k}- \s Ku_{k}) - (v - \s Ku)}^2.
\end{multline} 
By telescoping this inequality, one obtains boundedness of $(u_k)$ and
$(v_k)$.  In fact, a slightly tighter estimation in \eqref{pd:est}
would yield $\n{u_k-u_{k-1}}\to 0$ and $\n{v_k-v_{k-1}}\to 0$ (since
the inequality $\t \s \n{K}^2 < 1$ is strict).
\end{proof}
Although we do not know yet if the proposed scheme \eqref{new_pd} has any
benefits as compared to \eqref{pdhg}, we believe that both algorithms
will perform very similarly. Nevertheless, the fact that the Lyapunov function associated with the analysis of \eqref{new_pd} is different to the one use for \eqref{pdhg} might be of interest for deriving new extensions.

\section{Concluding Remarks/Future Directions}\label{s:end}
In this work, we proposed and analyzed a new algorithm for finding a
zero in the sum of two monotone operators, one of which is assumed to
be Lipschitz continuous. This algorithm naturally arise from a
non-standard  discretization of a continuous dynamical system with the
Douglas--Rachford algorithm.  To conclude, we outline possible
directions for future work.
\begin{itemize}
    \item \textbf{Extending the stepsize:} In our main result,
    Theorem~\ref{th:main-2}, we established convergence whenever
    $\la<\frac{1}{3L}$. However, for the reasons discussed in
    Remark~\ref{r:stepsize}, the upper-bound can be improved to
    $\la <\frac{1}{2L}$, at least when $A=0$. It would be interesting
    to either improve or show, by means of a counterexample, that the
    condition $\la <\frac{1}{3L}$ is optimal. Furthermore, it would
    also be interesting to investigate the optimal convergence rate
    under some additional assumptions, as it was done
    in~\cite{ryu2018operator} for the classical Douglas--Rachford
    algorithm.  
   	
   	\item \textbf{Linesearch:} It would be interesting to
        incorporate a linesearch procedure in the shadow 
        Douglas--Rachford method. Similarly, it makes sense  to
        consider a continuous dynamic scheme with variable steps, as
        it was done, for example, in~\cite{banert2015forward} for
        Tseng's method.
   	
        \item \textbf{Inertial terms:} It is important to study the
        extensions of \eqref{dyn3} and \eqref{fbr}, which incorporate
        additional inertial and relaxed terms, as it was done in the
        recent work~\cite{attouch2018convergence} for the
        forward-backward method. Combining inertial and relaxing
        effects allows one to go beyond the standard bound of $\frac 1 3$
        for the stepsize associated with the inertial term.
        
        \item \textbf{Role of reflection:} Perhaps the most
        interesting and challenging direction for future work is to understand
        why the inclusion of a ``reflection term'' in an algorithm allows for 
        convergence to proven under milder hypotheses. For instance, applied to the saddle
        point problem~\eqref{saddle}, the famous \emph{Arrow--Hurwicz
        algorithm}~\cite{arrow1957} can fail to converge. In contrast, both \eqref{pdhg} and \eqref{new_pd}, which can be viewed its ``reflected'' modifications, do converge. Similarly, for the monotone variational inequality $0\in N_C(x) + B(x)$,
        where $C$ is a closed convex set and $N_C$ is its normal cone,
        the projected gradient algorithm
        \[x_{k+1} = P_C (x_k - \la B(x_k))\] does not work, but its
        ``reflected'' modification~\cite{M2015} given by
        \[x_{k+1} = P_C (x_k - \la B(2x_k-x_{k-1}))\] 
        does converge to a solution. For the more general monotone inclusion ${0 \in A(x) + B(x)}$, the forward-backward method also does not work, however both of its ``reflected''
        modifications, \eqref{fbr} and \eqref{siopt}, do. We note however that although all of 
        aforementioned algorithms share the same
        ``reflected term'', their analyses are not the same. It
         would be interesting to understand deeper reasons for their success.
\end{itemize}

\small
\paragraph{Acknowledgements.}
E.R.~Csetnek was supported by Austrian Science Fund Project P
29809-N32.  Y.~Maltsky was supported by German Research Foundation
grant SFB755-A4.  The authors also would like to thank the Erwin
Sch\"rodinger Institute for their support and hospitality during the
thematic program ``Modern Maximal Monotone Operator Theory: From
Nonsmooth Optimization to Differential Inclusions''.

\bibliographystyle{acm}
\bibliography{biblio}
\end{document}